\newcommand{\inv}{^{-1}}
\newcommand{\til}{\widetilde}
\newcommand{\ov}{\overline}
\newtheorem{theorem}{Theorem}[section]
\newtheorem{proposition}[theorem]{Proposition}
\newtheorem{lemma}[theorem]{Lemma}
{\theoremstyle{definition}
}
{\theoremstyle{remark}
}
\theoremstyle{remark}
\newtheorem{step}{Step}}
\title[The generalized word problem in graphs of groups]{An automata theoretic approach to the generalized word problem in graphs of groups}
\author{Markus Lohrey\and Benjamin Steinberg}
\thanks{The authors would like to
acknowledge the support of DFG Mercator program.  The second author is also supported by an NSERC grant.}
\address{Universit\"at
Leipzig, Institut f\"ur Informatik, Germany
 \and
School of Mathematics and Statistics,
Carleton University, ON, Canada}
\email{lohrey@informatik.uni-leipzig.de,
bsteinbg@math.carleton.ca}
\date{May 26, 2009}
\begin{document}

\begin{abstract}
We give a simpler proof using automata theory of a recent result of Kapovich, Weidmann and
Myasnikov according to which so-called benign graphs of groups preserve
decidability of the generalized word problem.  These include graphs of groups in which edge groups are polycyclic-by-finite and vertex groups are either locally quasiconvex hyperbolic or polycyclic-by-finite and so in particular chordal graph groups (right-angled Artin groups).
\end{abstract}
\maketitle

\section{Introduction}

The {\em generalized word problem} is one the classical decision problems in
group theory. For a finitely generated (f.g.) group $G$, the generalized word problem
for $G$ asks given as input elements $g, g_1,\ldots,g_n$ (represented by words over
some given generating set of $G$) whether $g$ belongs to the subgroup
generated by $g_1, \ldots, g_n$. 
Examples of groups with decidable generalized word problem are
f.g.~free groups (see for instance~\cite{Stal83}), polycyclic groups~\cite{AvWi89,Mal83},
and f.g.~metabelian groups~\cite{Rom74,Rom80}. Moreover, every subgroup separable 
finitely presented group has a decidable generalized word problem. Mikhailova~\cite{Mih59} proved
that if the generalized word problem is decidable in $G_1$ and $G_2$
then the same holds for the free product $G_1 * G_2$.
On the other hand, Mikhailova also proved that the direct product
of two free groups of rank $2$ has an undecidable generalized word
problem~\cite{Mih66}. The same was shown by Rips~\cite{Rip82b}
for certain hyperbolic groups, see~\cite{Wise03} for refinements of Rip's
construction.  Free solvable groups of rank $2$ and derived length at least $3$ also have undecidable generalized word problem~\cite{Umi95}.  It should be noted that in these undecidability results
the f.g.~subgroup for which membership is being asked is fixed, i.e., a 
fixed f.g.~subgroup $H$ of the ambient group $G$ is constructed such that
it is undecidable whether a given element of $G$ belongs to $H$.
On the other hand, all the decidability results mentioned above
are {\em uniform} in the sense that the f.g.~subgroup is part of the input.
In order to make this distinction clear, we will often use the term ``uniform
generalized word problem'' in the sequel.

The starting point of our work is a recent result of Kapovich, Weidmann and
Myasnikov~\cite{KaWeMy05}, which provides a condition for a graph of groups $\mathbb{G}$ that implies decidability of the uniform generalized word problem for the 
fundamental group $\pi_1(\mathbb{G})$. These conditions are quite technical
(see the definition of a benign graph of groups in 
Section~\ref{GWP}); let us just mention that (of course), for 
every vertex group of $\mathbb{G}$, the uniform generalized word problem
has to be decidable and that every edge group has be Noetherian (i.e.,
does not contain an infinite ascending chain of subgroups).   In~\cite{KaWeMy05} it is shown that graphs of groups in which edge groups are polycyclic-by-finite and vertex groups are either locally quasiconvex hyperbolic or polycyclic-by-finite --- and so in particular the graphs of groups representing chordal graph groups (right-angled Artin groups) --- satisfy the necessary conditions.

The proof in~\cite{KaWeMy05} uses an extension of the Stallings folding
technique~\cite{Stal83}.  The idea is to create a ``folded'' graph that recognizes a normal form for each element of the subgroup.  The need to be able to accept each element of the subgroup with one graph is what makes the folding moves in~\cite{KaWeMy05} very technical.   Our proof in Section~\ref{GWP}
is based on an automaton saturation process in the style of 
Benois construction~\cite{Benois69,BenSak86} for rational subsets of free groups, 
see also~\cite{KaSiSt06,LohSte08}. A crucial idea is that instead of looking for membership 
of a given group element $g$ in a f.g.~subgroup $H$, we check membership of
$1$ in the coset $Hg^{-1}$. This makes the whole algorithm simpler
because the normal form theorem 
for fundamental groups of graphs of groups is simplest for elements representing $1$. The ascending chain condition
for  edge groups is what guarantees that our saturation process eventually 
terminates.

\section{Cosets}
Our approach to the generalized word problem is to consider cosets of finitely
generated subgroups, rather than finitely generated subgroups.  This has the 
advantage that the uniform problem reduces to checking whether the identity 
belongs to a coset, which is often easier.

It turns out to be useful to describe cosets in a way that does not refer 
to which subgroup it is a coset of and whether it is a right or left coset.  
Such a way was considered by Schein~\cite{Schein66}.

A \emph{coset} of a group $G$ is a subset $A$ of $G$  such that $AA\inv A=A$.  
Equivalently, if we view $G$ as a universal algebra with a ternary operation 
$(x,y,z)\mapsto xy\inv z$, then a coset is a subalgebra of $G$.
Traditionally, 
cosets are required to be non-empty, but it turns out for 
this paper that it is convenient to also allow the empty set to be a coset.    
Notice that this definition of a coset is left-right dual.    
One can verify that a non-empty set $A$ is a coset in this 
sense if and only if $A=Hg$ for some subgroup $H\leq G$ and some element $g\in G$.

\begin{proposition}
Let $A$ be a non-empty coset of $G$.  Then $H=AA\inv$ is a 
subgroup of $G$ and if $g\in A$, then $A=Hg$.  Conversely, if $H$ is 
a subgroup of $G$ and $g\in G$, then $Hg$ is a non-empty coset of $G$.
\end{proposition}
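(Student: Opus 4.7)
The plan is to handle both implications directly from the defining identity $AA\inv A=A$, which in the forward direction amounts to three standard subgroup checks plus a double containment, and in the converse direction is a one-line cancellation.

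For the forward direction, assume $A$ is non-empty and satisfies $AA\inv A=A$, and set $H=AA\inv$. First I would verify the three subgroup axioms for $H$. Picking any $a\in A$ (which exists by non-emptiness), $aa\inv=1\in H$, giving non-emptiness. Closure under inversion is immediate from $(ab\inv)\inv=ba\inv$ and symmetry of the definition of $H$. The key step, and the only one that uses the coset identity, is closure under products: given $ab\inv, cd\inv\in H$ with $a,b,c,d\in A$, I use the fact that $ab\inv c\in AA\inv A=A$ to rewrite $ab\inv cd\inv=(ab\inv c)d\inv\in AA\inv=H$.

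Next, for $g\in A$, I would show $A=Hg$ by two inclusions. The inclusion $Hg\subseteq A$ is $AA\inv g\subseteq AA\inv A=A$, using $g\in A$. For the reverse, any $a\in A$ can be written $a=(ag\inv)g$ with $ag\inv\in AA\inv=H$, hence $a\in Hg$.

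The converse is a direct computation: for any subgroup $H\le G$ and $g\in G$, one has
\[
(Hg)(Hg)\inv(Hg)=Hgg\inv H\inv Hg=HH\inv Hg=Hg,
\]
and $Hg$ is evidently non-empty. I do not anticipate any real obstacle here; the only subtle point is making sure that the product closure argument for $H$ uses the full ternary identity $AA\inv A=A$ rather than merely $AA\inv\subseteq AA\inv$, since it is precisely the absorption of the middle ``$A\inv$'' factor that forces the product of two elements of $H$ to remain in $H$.
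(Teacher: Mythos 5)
Your proof is correct and follows essentially the same route as the paper's: both establish that $H=AA\inv$ is a subgroup using the identity $AA\inv A=A$ (the paper phrases the product-closure step setwise as $AA\inv AA\inv=AA\inv$, you phrase it elementwise via $ab\inv c\in A$), and both prove $A=Hg$ and the converse by the same double inclusion and the same cancellation $(Hg)(Hg)\inv(Hg)=H^3g=Hg$.
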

\begin{proof}
Assume first that $A$ is a non-empty coset.
Since $AA\inv$ is non-empty, $AA\inv AA\inv =AA\inv$ and 
$(AA\inv)\inv = AA\inv$, it follows that $H = AA\inv$ is a 
subgroup of $G$.  Let $g\in A$.  Clearly $Hg\subseteq AA\inv A=A$.  
Conversely, if $a\in A$ then $ag\inv \in AA\inv =H$ and so $a\in Hg$.

Clearly, if $H$ is a subgroup of $G$ and $g\in G$, then 
$(Hg)(Hg)\inv Hg= H^3g=Hg$ and so $Hg$ is a non-empty coset.
\end{proof}

The set $K(G)$ of all cosets of $G$ is a complete lattice with respect to the inclusion ordering since it is the set of all subalgebras of $G$ with respect to the ternary operation considered above.  The maximum element of $K(G)$ is the coset $G$ itself.  The minimal non-empty elements are the singleton subsets of $G$, which we identify with the elements of $G$ notationally, i.e., we write $g$ instead of $\{g\}$.  Given any subset $X$ of $G$, there is a least coset $A$ containing $X$, denoted $\ov X$, and called the coset generated by $X$.   It can be described as the intersection of all cosets containing $X$. If $X=\emptyset$, then $\ov X=\emptyset$ and otherwise $\ov X$ is non-empty.   We remark that if $A$ and $B$ are two cosets containing $g$ and $A=Hg$, $B=Kg$, then $A\cap B = (H\cap K)g$.  Note that if $A$ is a coset of $G$ and $H$ is a subgroup of $G$, then $A\cap H$ is a coset of $H$ (possibly empty).

Notice that if $A\subseteq B$ are cosets and $AA\inv = BB\inv$, then $A=B$.
This is clear if $A=\emptyset$.  Otherwise, $A,B$ are non-empty. Suppose $g\in
A\subseteq B$.  Then $B=BB\inv g=AA\inv g=A$.  Thus $G$ is Noetherian (i.e.,
$G$ satisfies the
ascending chain condition on subgroups, or equivalently all its 
subgroups are finitely generated) if and only if it satisfies the ascending chain condition on cosets.

Let us say that a coset $A$ is \emph{finitely generated} if there is a finite set of elements $X\subseteq G$ so that $A=\ov X$. That is $A$ is finitely generated as an algebra with respect to the ternary operation $(x,y,z)\mapsto xy\inv z$.    The next proposition shows that a non-empty coset is finitely generated if and only if it is a coset of a finitely generated subgroup; the empty coset is of course finitely generated.

\begin{proposition}\label{generators}
Let $A$ be a non-empty coset of $G$.  Then $A$ is finitely generated if and
only if $H=AA\inv$ is a finitely generated subgroup of $G$.  
More specifically, let $g \in A$. If
$\{g_i\mid i\in I\}$ generates $H$ as a subgroup, then $\{g_ig\mid i\in
I\}\cup \{g\}$ generates the coset $A$ 
and if $\{a_i\mid i\in J\}$ generates the coset $A$, then $\{a_ig\inv\mid i\in J\}$ generates $H$ as a subgroup.
\end{proposition}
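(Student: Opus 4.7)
The plan is to fix once and for all an element $g\in A$ and to use the previous proposition, which tells us that $A=Hg$.  The ``more specific'' statements about generators imply the finite-generation equivalence (a finite generating set of $H$ yields a finite generating set of $A$ of size $|I|+1$, and vice versa), so it suffices to prove those.

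For the first implication, assume $H=\langle g_i\mid i\in I\rangle$ and set $X=\{g_ig\mid i\in I\}\cup\{g\}$.  Since each $g_ig\in Hg=A$ and $g\in A$, we have $X\subseteq A$, hence $\ov X\subseteq A$.  For the reverse inclusion I would introduce the auxiliary set
\[
K=\{xg\inv\mid x\in \ov X\}\subseteq G,
\]
and argue that $K$ is a subgroup of $G$ containing every $g_i$.  Each individual closure property of $K$ corresponds to one application of the ternary operation on $\ov X$: the identity comes from $gg\inv$; closure under multiplication uses $(x_1g\inv)(x_2g\inv)=(x_1g\inv x_2)g\inv$ where $x_1g\inv x_2\in\ov X$ via the ternary operation applied to $x_1,g,x_2\in\ov X$; and closure under inversion uses $(xg\inv)\inv=(gx\inv g)g\inv$ where $gx\inv g\in\ov X$ via the ternary operation applied to $g,x,g\in\ov X$.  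Since clearly $g_i=(g_ig)g\inv\in K$, we obtain $H\le K$, and hence $A=Hg\subseteq Kg=\ov X$.

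For the converse, assume $A=\ov{\{a_i\mid i\in J\}}$ and let $K=\langle a_ig\inv\mid i\in J\rangle\le G$.  Each $a_i\in A=Hg$ gives $a_ig\inv\in H$, so $K\le H$.  For the reverse, invoke the previous proposition to conclude that $Kg$ is a non-empty coset of $G$.  It contains each $a_i=(a_ig\inv)g$, hence it contains $\ov{\{a_i\}}=A$, and so $Hg\subseteq Kg$, giving $H\le K$.  Combining, $H=K$, which is finitely generated.

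The only mildly delicate point is the first implication: one must resist the temptation to use arbitrary products in $\ov X$ (which is only closed under the ternary operation, not under the binary group operations) and instead translate each group-theoretic closure property of $K$ into exactly one application of $(x,y,z)\mapsto xy\inv z$ inside $\ov X$.  Once this translation is in place, both implications are essentially one-line coset manipulations.
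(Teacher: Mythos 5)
Your proof is correct and follows essentially the same route as the paper's: the converse direction is the paper's argument verbatim, and the forward direction differs only in that you verify directly, via the ternary operation, that $\ov{X}g\inv$ is a subgroup containing $H$, whereas the paper instead notes that any coset $B$ containing your generating set satisfies $BB\inv\supseteq H$ and hence $B\supseteq Hg=A$. Both come down to the same one-line coset manipulations, and your explicit translation of each closure property of $\ov{X}g\inv$ into a single application of $(x,y,z)\mapsto xy\inv z$ just makes precise what the paper leaves implicit.
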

\begin{proof}
%We prove the second statement as the it implies the first.  
Assume first that $\{g_i\mid i\in I\}$ generates $H$ as a subgroup.  Then
clearly $g_ig\in Hg=A$ for all $i \in I$ and $g\in Hg=A$.  Now if $B$ is any coset
containing all $g_ig$ and $g$, then $BB\inv$ contains the $g_i$ and hence $H$.
Thus $A=Hg\subseteq BB\inv B=B$.  This shows that $A$ is generated by
$\{g_ig\mid i\in I\}\cup \{g\}$.  Next suppose that $\{a_i\mid i\in J\}$
generates $A$ as a coset.  Then clearly, $H$ contains the $a_ig\inv$.  If $K$
is any subgroup containing the $a_ig\inv$, 
then $Kg$ contains the $a_i$ and hence contains $A$.  
Thus $H=AA\inv \subseteq Kg(Kg)\inv = K$.  This shows that the $a_ig\inv$ generate $H$ as a subgroup.
\end{proof}

Notice that when going from coset generators to group generators, we can choose $g$ to be one of the $a_i$.

\section{Dual automata and cosets}
In this section we consider an automaton model for recognizing cosets of groups.    If $\Sigma$ is a set we use $\til \Sigma$ for $\Sigma$ together with a set of formal inverses $\Sigma^{-1}$.  Then $\til \Sigma^*$  denotes the free monoid on $\til \Sigma$, which we view as the free monoid with involution in the natural way.   The free group on $\Sigma$ will be denoted $F(\Sigma)$.  When convenient, we will identify $F(\Sigma)$ with the set of reduced words in $\til \Sigma^*$ and the canonical projection $\rho\colon \til \Sigma^*\to F(\Sigma)$ will often be thought of as freely reducing a word.  Throughout this article, $\Sigma$ will be assumed finite.

\subsection{Dual automata}
By a graph $\Gamma$, we mean a graph in the sense of Serre~\cite{Serre03}.  So
$\Gamma$ consists of a set $V$ of vertices, $E$ of edges, a function
$\alpha\colon E\to V$ selecting the initial vertex of an edge and a
fixed-point-free involution on $E$ written $e\mapsto e\inv$.  This involution
extends to paths in the natural way.  One defines the terminal vertex function
$\omega\colon E\to V$ by $\omega(e)=\alpha(\ov e)$.  A \emph{dual automaton}
$\mathscr A$ over $\Sigma$ 
is a $4$-tuple $(\Gamma,\iota,\tau,\delta)$ where:
\begin{itemize}
 \item $\Gamma=(V,E)$ is a graph;
\item  $\iota,\tau$ are distinguished vertices of $\Gamma$, called the
  \emph{initial} 
and \emph{terminal} vertices of $\Gamma$ respectively;
\item  $\delta\colon E\to \til \Sigma^*$ is an involution preserving map,
  i.e., $\delta(e^{-1}) = \delta(e)\inv$.
\end{itemize}
A dual automaton will be called \emph{literal} if $\delta\colon E\to \til \Sigma$.

The map   $\delta$ extends to paths in the obvious way.
The \emph{language of $\mathscr A$}, denoted $L(\mathscr A)$, 
is the subset of $F(\Sigma)$ consisting of all elements $w$ 
so that there is a path $p$ from $\iota$ to $\tau$ with $\delta(p)=w$ in $F(\Sigma)$, i.e., $\rho\delta(p)=w$.
We say that $p$ is an \emph{accepting path} for $w$.

\begin{proposition}\label{dualautomatalanguages}
The language of a (finite) dual automaton over $\Sigma$ is a (finitely generated) coset of $F(\Sigma)$.  Conversely, every (finitely generated) coset of $F(\Sigma)$ is the language of a literal (finite) dual automaton over $\Sigma$.
\end{proposition}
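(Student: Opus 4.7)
The plan is to handle the two directions separately, with both arguments turning on the fundamental group of the underlying graph of a dual automaton.

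For the forward direction, let $\mathscr A=(\Gamma,\iota,\tau,\delta)$ be a dual automaton; if $\iota$ and $\tau$ lie in different connected components of $\Gamma$ then $L(\mathscr A)=\emptyset$ is a (finitely generated) coset by definition. Otherwise, I would fix any path $p_0$ from $\iota$ to $\tau$, set $g_0=\rho\delta(p_0)\in F(\Sigma)$, and let $H$ be the set of $\rho\delta$-images of loops at $\iota$. Since $\delta(e\inv)=\delta(e)\inv$, this set is closed under concatenation and reversal of loops, so it is a subgroup of $F(\Sigma)$, and the identification $L(\mathscr A)=Hg_0$ follows by observing that $p\cdot p_0\inv$ is a loop at $\iota$ whenever $p$ is a path $\iota\to\tau$, while concatenating any loop at $\iota$ with $p_0$ realizes an arbitrary element of $Hg_0$. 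The main obstacle is showing that $H$ is finitely generated when $\Gamma$ is finite: for this I would pick a spanning tree $T$ of the connected component of $\iota$ and, for every edge $e$ (modulo the involution) not in $T$, form the basic loop $t_{\alpha(e)}\cdot e\cdot t_{\omega(e)}\inv$, where $t_v$ denotes the reduced tree path from $\iota$ to $v$. Inserting trivial detours $t_v\inv t_v$ at each intermediate vertex of an arbitrary loop and regrouping expresses it as a concatenation of basic loops and their inverses, so their $\rho\delta$-images---finite in number---generate $H$.

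For the converse, given a (finitely generated) coset $A$ of $F(\Sigma)$, the case $A=\emptyset$ is handled by taking the literal dual automaton on two isolated vertices $\iota\neq\tau$. Otherwise I would write $A=\ov X$ and pick $g\in X$; by Proposition~\ref{generators}, $H=AA\inv$ is generated by $\{xg\inv\mid x\in X\}$, which is a finite set $h_1,\ldots,h_n$ when $X$ is finite. I would represent each $h_i$ and $g$ by reduced words over $\til\Sigma$ and then construct the literal dual automaton consisting of a base vertex $\iota$ with one combinatorial petal per $h_i$ (subdivided so that each edge carries a single letter, and equipped with formal inverse edges), together with a fresh path from $\iota$ to a new terminal vertex $\tau$ spelling $g$ (collapsing $\tau=\iota$ if $g=1$). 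Applying the analysis of the first half to this graph---using a spanning tree that omits exactly one edge from each petal and takes the entire $g$-path---the basic loops map under $\rho\delta$ to the $h_i$, so the loop subgroup equals $H$ and the language is $Hg=A$.

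In summary, the substantive work is the spanning-tree argument, which simultaneously yields finite generation of the loop subgroup in the forward direction and verifies the intended construction in the converse; the rest is bookkeeping.
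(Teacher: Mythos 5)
Your proof is correct and takes essentially the same approach as the paper: both identify the nonempty language as $\delta(\pi_1(\Gamma,\iota))\cdot\delta(p_0)$ for a fixed path $p_0$ from $\iota$ to $\tau$, and both use the bouquet-of-petals-plus-thorn construction for the converse. The only difference is that the paper verifies $LL\inv L=L$ directly by concatenating accepting paths and simply cites finite generation of the fundamental group of a finite graph, whereas you unpack that fact with the explicit spanning-tree/basic-loop argument (where one should note, as is standard, that the factors coming from tree edges map to $1$).
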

\begin{proof}
Let $\mathscr A=(\Gamma,\iota,\tau,\delta)$ be a dual automaton over $\Sigma$.
Let $L=L(\mathscr A)$. If $L$ is empty, then we are done, so assume it is
non-empty. It is always true that $L\subseteq LL\inv L$.  Conversely, if $w\in
LL\inv L$ with $w=uv\inv z$ such that $u,v,z\in L$ and if $p,q,r$ are
paths accepting $u,v,z$ respectively, then $pq\inv r$ accepts $w=uv\inv z$.
Thus $LL\inv L=L$ and so $L$ is a coset.  Notice that the map $\delta\colon E\to \til \Sigma^*$ induces
a functor, also denoted $\delta$, from the fundamental groupoid of $\Gamma$ to
$F(\Sigma)$.  It follows immediately from the definition that if $L\neq \emptyset$, then
$LL\inv = \delta(\pi_1(\Gamma,\iota))$ and so alternatively we can 
describe $L$ as $\delta(\pi_1(\Gamma,\iota))\delta(p)$ where $p$ is any path from $\iota$ to $\tau$.

Next suppose that $\mathscr A$ is finite and assume still that 
$L\neq \emptyset$.  Then $\pi_1(\Gamma,\iota)$ is finitely generated 
and so $LL\inv$ is finitely generated and hence $L$ is finitely generated by Proposition~\ref{generators}.

Conversely, let $X\subseteq F(\Sigma)$ and let $w\in F(\Sigma)$.  Define a literal dual automaton by taking a bouquet of subdivided circles at a base point $\iota$ labeled by the elements of $X$ (with the appropriate dual edges) and attach a thorn labeled by $w$ from $\iota$ to a new vertex $\tau$ (again with the appropriate dual edges).  Then the language recognized by the resulting literal dual automaton is $\langle X\rangle w$ and the automaton is finite if $X$ is finite.
\end{proof}

Notice that the proof of Proposition~\ref{dualautomatalanguages} is effective.
If $G$ is a group generated by $\Sigma$ and $\varphi\colon \til F(\Sigma)\to G$ is
the canonical morphism, then the subset of $G$ \emph{recognized} by the dual
automaton $\mathscr A$ is by definition $\varphi(L(\mathscr A))$.  It follows that
a subset of $G$ is recognized by a finite dual automaton if and only if it
is empty or a coset of a 
finitely generated subgroup. We obtain:

\begin{proposition}\label{generalizedwordproblem}
Let $G$ be a group generated by a finite set $\Sigma$.  Then the following are equivalent:
\begin{enumerate}
\item The uniform generalized word problem is decidable for $G$.
\item Uniform membership is decidable in finitely generated cosets of $G$.
\item Uniform membership is decidable in subsets of $G$ recognized by finite
  dual automata over $\Sigma$.
\item There is an algorithm which given a finite dual automaton $\mathscr A$
  over $\Sigma$ as input, determines whether $1$ belongs to the subset of $G$ 
  recognized by $\mathscr A$.
\item There is an algorithm which given a finite literal dual automaton
  $\mathscr A$ over 
  $\Sigma$ as input, determines whether $1$ belongs to the subset of $G$ recognized by $\mathscr A$.
\item There is an algorithm which given a finitely generated coset $A$ of $G$
  (by a generating set) determines whether $1\in A$.
\end{enumerate}
\end{proposition}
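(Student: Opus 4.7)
The plan is to establish the equivalence by proving the cycle $(1)\Rightarrow(2)\Rightarrow(3)\Rightarrow(4)\Rightarrow(5)\Rightarrow(6)\Rightarrow(1)$, exploiting Proposition~\ref{generators} to pass between subgroup and coset formulations and the effective content of Proposition~\ref{dualautomatalanguages} to pass between cosets and dual automata. The implications $(3)\Rightarrow(4)$ (specialise the query element to~$1$) and $(4)\Rightarrow(5)$ (literal dual automata are dual automata) are immediate and set aside at the outset.

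For $(1)\Rightarrow(2)$ I would proceed as follows. Given a finitely generated coset $A$ by generators $a_1,\ldots,a_k\in G$ and a query element $g\in G$, first dispose of the empty case (trivially detected from the generating set). Otherwise Proposition~\ref{generators} presents $A=Ha_1$ with $H=\langle a_2a_1\inv,\ldots,a_ka_1\inv\rangle$, and then $g\in A$ iff $ga_1\inv\in H$, which~(1) decides. The converse $(6)\Rightarrow(1)$ is the conceptual pivot emphasised in the introduction: $g\in\langle g_1,\ldots,g_n\rangle$ iff $1\in\langle g_1,\ldots,g_n\rangle g\inv$, and by Proposition~\ref{generators} the latter coset is finitely generated, with explicit generators $g_1g\inv,\ldots,g_ng\inv,g\inv$, so~(6) applies.

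The automaton steps rely on the effective content of Proposition~\ref{dualautomatalanguages}. For $(5)\Rightarrow(6)$ the bouquet-of-subdivided-circles construction in the proof of that proposition builds a finite literal dual automaton from coset generators in an obviously algorithmic way. For $(2)\Rightarrow(3)$, given a finite dual automaton $\mathscr A=(\Gamma,\iota,\tau,\delta)$, I would first test emptiness of $L(\mathscr A)$ by checking reachability of $\tau$ from $\iota$ in $\Gamma$; any $\iota$-to-$\tau$ path yields an accepting word, so graph-theoretic emptiness coincides with emptiness in $F(\Sigma)$. In the non-empty case, a spanning tree of $\Gamma$ provides an explicit finite generating set of $\pi_1(\Gamma,\iota)$; applying $\delta$ and combining with the label of a chosen $\iota$-to-$\tau$ path gives, via Proposition~\ref{generators}, explicit coset generators in $F(\Sigma)$ for $L(\mathscr A)$. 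Projecting under the canonical morphism $\varphi\colon \til\Sigma^*\to G$ produces coset generators in $G$ for the recognised subset, and~(2) concludes.

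I do not anticipate any genuine obstacle: essentially every ingredient has been supplied by the preceding propositions, and the only point requiring attention is insisting that the relevant constructions — most notably the spanning-tree extraction of generators of $\pi_1(\Gamma,\iota)$ — are algorithmic. The crucial equivalence to highlight is $(1)\Leftrightarrow(6)$, because the reduction of the generalized word problem to the single-point query ``$1\in A$?'' for finitely generated cosets $A$ is precisely the viewpoint that makes the coset-based saturation algorithm of Section~\ref{GWP} both natural and tractable.
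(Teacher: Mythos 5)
Your proposal is correct and follows essentially the same route as the paper: the cheap implications $(3)\Rightarrow(4)\Rightarrow(5)$, the passage between coset generators and subgroup generators via Proposition~\ref{generators} for $(1)\Leftrightarrow(2)$ and $(6)\Rightarrow(1)$, and the effective content of Proposition~\ref{dualautomatalanguages} (bouquet construction one way, spanning-tree generators of $\pi_1(\Gamma,\iota)$ the other) for the automaton steps. The only difference is cosmetic --- you arrange the implications in a single cycle and spell out the effectivity details (emptiness via reachability, extraction of fundamental-group generators) that the paper leaves implicit.
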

\begin{proof}
The equivalence of the first two items is clear.  The equivalence of 2 and 3 follows from Proposition~\ref{dualautomatalanguages}.  Clearly 3 implies 4 implies 5.  To see that 5 implies 6, suppose $A=\ov X$.  If $X=\emptyset$, there is nothing to prove.  Otherwise, by Proposition~\ref{generators} we can find a generating set for $H=AA\inv$, and $A=Ha$ where $a\in X$.  The proof of Proposition~\ref{dualautomatalanguages} then effectively constructs a literal dual automaton recognizing $A$.  That 6 implies 1 follows from the observation that $g\in H$ if and only if $1\in Hg\inv$ and Proposition~\ref{generators}, which allows us to effectively switch between coset generators and generators of a subgroup.
\end{proof}

\section{The generalized word problem} \label{GWP}

In this section we use dual automata to give a technically 
simpler proof of a result from~\cite{KaWeMy05}
on the decidability of the uniform generalized word problem 
for certain graphs of groups.  We do not obtain algorithmically 
the induced splitting of subgroup, as is done in~\cite{KaWeMy05}.

To make clear the main idea, we first use dual automata to give a 
short proof that the free product of groups with decidable generalized 
word problem again has decidable generalized word problem, 
a result due to Mikhailova \cite{Mih59}.  Although strictly speaking this is a special case of our main result, it seems worth proving separately to isolate the key idea.

\begin{theorem}[Mikhailova~\cite{Mih59}]
Let $G_1,G_2$ be groups with decidable uniform generalized word problem.  Then the free product $G_1\ast G_2$ has decidable uniform generalized word problem.
\end{theorem}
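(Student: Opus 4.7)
The plan is, via Proposition~\ref{generalizedwordproblem}, to reduce to the following task: given a finite literal dual automaton $\mathscr A=(\Gamma,\iota,\tau,\delta)$ over $\Sigma=\Sigma_1\sqcup\Sigma_2$ (with $\Sigma_i$ a finite generating set for $G_i$), decide whether $1\in\varphi(L(\mathscr A))$ in $G_1\ast G_2$, where $\varphi\colon\til\Sigma^*\to G_1\ast G_2$ is the canonical morphism. The idea is to saturate $\mathscr A$ by $\varepsilon$-edges, that is, new edges $e$ for which $\delta(e)=1\in\til\Sigma^*$, that witness local triviality inside one of the factors. The normal form theorem for free products says a word represents $1$ in $G_1\ast G_2$ if and only if it can be collapsed to the empty word by iteratively deleting syllables that are trivial in the appropriate factor, and the saturation is engineered to realize these collapses in the graph.

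Concretely, iterate the following step until no change occurs. For each $i\in\{1,2\}$, form the sub-dual-automaton $\mathscr A_i$ whose vertex set is that of the current $\mathscr A$ and whose edges are the edges of $\mathscr A$ labeled in $\til\Sigma_i$ together with every $\varepsilon$-edge already present (these contribute empty labels). For every ordered pair $(v,w)$ of vertices not yet joined by an $\varepsilon$-edge, regard $\mathscr A_i$ as a finite dual automaton over $\Sigma_i$ with initial vertex $v$ and terminal vertex $w$ and invoke the generalized word problem oracle for $G_i$, via Proposition~\ref{generalizedwordproblem}, to test whether $1$ lies in the subset of $G_i$ recognized by $\mathscr A_i$; if so, add an $\varepsilon$-edge from $v$ to $w$ together with its formal inverse. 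Termination is immediate because at most $|V|^2$ such edges can ever be introduced. At the end, declare that $1\in\varphi(L(\mathscr A))$ if and only if there is a path from $\iota$ to $\tau$ in the saturated automaton consisting entirely of $\varepsilon$-edges (the trivial path when $\iota=\tau$ being allowed).

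Correctness has two directions. One direction is a straightforward induction on the stage at which an $\varepsilon$-edge is added: any such edge certifies an honest path in the original $\mathscr A$ whose label evaluates to $1$ in $G_1\ast G_2$, so an $\varepsilon$-path from $\iota$ to $\tau$ truly exhibits $1\in\varphi(L(\mathscr A))$. The other direction is the heart of the argument: given any accepting path $p$ with $\varphi\delta(p)=1$, one must show that the saturation produces an $\varepsilon$-path from $\iota$ to $\tau$. I would induct on the number of syllable cancellations in the free product normal form of $\delta(p)$: locate an innermost syllable that becomes trivial in its factor, observe that the corresponding subpath has its endpoints eventually joined by an $\varepsilon$-edge from the saturation for that factor, collapse that subpath to a single $\varepsilon$-edge, and iterate on the shorter path.

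The main obstacle is this second direction, because adding $\varepsilon$-edges for factor $i$ may expose previously invisible trivialities in factor $j$, so the induction has to track how the interleaved rounds of the saturation faithfully mirror every layer of free product cancellation. The essential ingredients that make the induction go through are the free product normal form theorem (which forces the existence of a trivial innermost syllable whenever the product is $1$ and the path is non-trivial) together with the finiteness of the vertex set (which bounds the saturation and ensures that every cancellation demanded by the normal form is eventually detected by one of the $G_i$-oracles).
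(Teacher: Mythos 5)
Your proposal is correct and follows essentially the same route as the paper: reduce via Proposition~\ref{generalizedwordproblem} to testing whether $1\in\varphi(L(\mathscr A))$ for a finite literal dual automaton, saturate with $1$-labelled edges using the generalized word problem oracles for the factors, and use the normal form theorem for free products on a syllable-minimal accepting word representing $1$ to prove completeness. The only cosmetic difference is that you test for a path of $\varepsilon$-edges from $\iota$ to $\tau$ rather than a single $1$-labelled edge, which is equivalent once the automaton is saturated.
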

\begin{proof}
Let $\Sigma_1,\Sigma_2$ be disjoint generating sets for $G_1$ and $G_2$ and put $\Sigma=\Sigma_1\cup \Sigma_2$.  Then $\Sigma$ is a generating set for $G=G_1\ast G_2$. By a syllable of a word $w\in \til \Sigma^*$, we mean a maximal non-empty factor of $w$ that can be written over a single alphabet $\til \Sigma_i$.  Let $\varphi\colon F(\Sigma)\to G$ be the projection.

Suppose that $\mathscr A$ is a finite literal dual automaton over $\Sigma$ with
initial vertex $\iota$ and terminal vertex $\tau$.  We perform the following saturation
procedure.  Start with $\mathscr A_0=\mathscr A$.  Assume inductively
$\mathscr A_i$ is obtained from $\mathscr A_{i-1}$ by adding a new edge
labeled by $1$ together with its inverse edge in 
such a way that $\varphi(L(\mathscr A_i))=\varphi(L(\mathscr A_{i-1}))$ 
for $i\geq 1$, but no vertices are added.  Suppose that there is a
pair $p,q$ of distinct vertices with no edge from $p$ to $q$ labeled by $1$
and that, for some $i=1,2$, there is an element of $\Sigma_i^*$ representing
$1$ in $G_i$ accepted by the finite dual automaton over $\Sigma_i$ obtained by
keeping only those edges of $\mathscr A_i$ labeled by elements of $\Sigma_i$
or by $1$, where we take the initial vertex to be $p$ and terminal vertex to be
$q$. Then we add an edge labeled by $1$ from $p$ to $q$, and the corresponding
inverse edge labeled by $1$ from $q$ to $p$, to obtain $\mathscr A_{i+1}$.
Otherwise the algorithm halts. Clearly $\varphi(L(\mathscr A_i)) = \varphi(L(\mathscr
A_{i+1}))$.   This procedure can be done effectively by Proposition~\ref{generalizedwordproblem}
since the
uniform generalized word problem is decidable 
in $G_1$ and $G_2$.  It eventually stops since we add no 
new vertices.  Let $\mathscr B$ be the final automaton obtained when the algorithm terminates.

We claim that $1\in \varphi(L(\mathscr A))$ if and only if $1$ labels an edge from
$\iota$ to $\tau$ in $\mathscr B$.  Since $\varphi(L(\mathscr B))=\varphi(L(\mathscr
A))$, trivially if there is an edge from $\iota$ to $\tau$ in $\mathscr B$
labeled by $1$, then $1\in \varphi(L(\mathscr A))$.  Conversely, suppose $1\in
\varphi(L(\mathscr A))$ and let $w$ be a word accepted by $\mathscr B$ with $\varphi(w)=1$ having a minimum number of syllables.  If $w=1$ or has one syllable, then by construction of $\mathscr B$ there is an edge labeled by $1$ from $\iota$ to $\tau$.  Otherwise, the normal form theorem for free products implies $w$ has a syllable representing $1$ in one of the free factors.  But then by construction of $\mathscr B$, the part of the accepting path for
$w$ traversed by this syllable can be replaced by a single edge labeled by $1$ and so
there is a word with fewer syllables accepted by $\mathscr B$ and mapping to $1$ in $G$.  This contradiction completes the proof. 
% let $w$ be a minimal length word with $\varphi(w)=1$ and
% that is accepted by $\mathscr B$.  If $|w|\geq 1$, then by the normal form
% theorem for free products there must be a factorization $w=uvz$ with $v\in
% \til \Sigma_i^*$ a non-empty word such that $v$ is $1$ in $G_i$, for some $i$.
% But then by construction of $\mathscr B$ (using in particular that each edge is labeled by a letter or $1$), the part of the accepting path for
% $w$ traversed by $v$ can be replaced by a single edge labeled by $1$ and so
% $uz$ is a shorter word accepted by $\mathscr B$ mapping to $1$ in $G$.  This
% contradiction shows $|w|=0$, that is, $w$ is the empty word.  Hence there is a
% path labeled by just $1$ from $\iota$ to $\tau$ in $\mathscr B$.  But by
% construction of $\mathscr B$ this means there 
% is an edge labeled by $1$ from $\iota$ to $\tau$.
\end{proof}

We briefly recall the definition of a graph of groups and its
fundamental group; a detailed introduction can be found in~\cite{Serre03}.
A \textit{graph of groups} $\mathbb G=(G,Y)$  consists of a graph $Y$ and
\begin{itemize}
\item[(i)] for each vertex $v \in V(Y)$, a group $G_v$;
\item[(ii)] for each edge $y \in E(Y)$, a group $G_y$ such that $G_y = G_{y\inv}$;
\item[(ii)] for each edge $y \in E(Y)$, monomorphisms $\alpha_y\colon G_y \to G_{\alpha(y)}$ and
$\omega_y \colon G_y \to G_{\omega(y)}$ such that $\alpha_y = \omega_{y\inv}$ for
all $y \in E(Y)$.
\end{itemize}
We assume that the groups $G_v$ intersect only in the identity, and
that they are disjoint from the edge set $E(Y)$. For each $v \in
V(Y)$, let $\langle \Sigma_v \mid R_v \rangle$ be a presentation
for $G_v$, with the different generating sets $\Sigma_v$ disjoint.  Let $\Delta$ be a 
set containing exactly one edge from each orbit of the involution $y\mapsto y\inv$ on $E(Y)$; we identify $E(Y)$ and $\til \Delta$ when convenient. Let $\Sigma$ be the (disjoint) union of all
the sets $\Sigma_v$ and $\Delta$.   We
define a group $F(G,Y)$ by the presentation
\begin{equation*}
F(G,Y)  =  \langle \Sigma \mid  R_v \ \left(v \in V(Y) \right),
y\omega_y(g)y\inv = \alpha_y(g) \ \left(y \in E(Y), g \in G_y \right) \ \rangle.
\end{equation*}

Fix a vertex $v_0 \in V(Y)$. A word in $w \in \til \Sigma^*$ is of
\textit{cycle type at $v_0$} if it is of the form $w = w_0 y_1 w_1
y_2 w_2 \dots y_n w_n$ where:
\begin{itemize}
\item[(i)]  $y_i \in E(Y)$ for all $1\leq i\leq n$;
\item[(ii)] $y_1\cdots y_n$ is a path in $Y$ starting and ending at $v_0$;
\item[(iii)] $w_0\in \til \Sigma_{v_0}^*$;
\item[(iv)] for $1\leq i\leq n$, $w_i\in \til \Sigma_{\omega(y_i)}^*$.
\end{itemize}
The images in $F(G,Y)$ of the words of cycle type at $v_0$ form a
subgroup $\pi_1(G, Y, v_0)$ of $F(G,Y)$, called the
\textit{fundamental group of $(G,Y)$ at $v_0$}. The fundamental group of a connected graph of groups is (up to isomorphism)
independent of the choice of vertex $v_0$.

Let $\mathbb G=(G,Y)$ be a graph of groups. Let $v_0$ be a 
vertex of $Y$ and fix a spanning tree $T$ for $Y$.  For a 
vertex $v$, let $p_v$ be the unique geodesic path in $T$ 
from $v_0$ to $v$.  The fundamental group $H=\pi_1(G, Y, v_0)$ 
is generated by the words of cycle type at $v_0$ of the form 
$p_{\alpha(y)}yp_{\omega(y)}\inv$ with $y\in \Delta \setminus T$ and  
$p_vxp_v\inv$ with $x\in \Sigma_v$.  In particular, if $Y$ is finite and each of the vertex groups 
is finitely generated, then the fundamental group is finitely generated.

A graph of groups $\mathbb G$ is \emph{benign}~\cite{KaWeMy05} if the following conditions hold:
\begin{enumerate}
\item For each vertex $v\in V(Y)$ and each edge $y\in E(Y)$ with $\omega(y)=v$, there is an algorithm which given a  finitely generated subgroup $K$ of $G_v$ (in terms of a finite generating set for $K$ given by words from $\til \Sigma_v^*$) and an element $g \in G_v$ (via a word in $\til \Sigma_v^*$) determines whether $Kg \cap \omega_y(G_y)$ is empty and if it is non-empty returns an element of the intersection  (represented by a word in the alphabet $\til \Sigma_v$);
\item  Each edge group $G_y$ is Noetherian, or equivalently, all its subgroups are finitely generated;
\item The uniform generalized word problem is decidable for each edge group $G_y$;
\item For each vertex $v\in V(Y)$ and each edge $y\in E(Y)$ with $\omega(y)=v$, there is an algorithm which given a finitely generated subgroup $K$ of $G_v$ (represented by a finite set of words over $\til \Sigma_v^*$ generating $K$) computes a finite generating set for $K\cap \omega_y(G_y)$ as words over $\til \Sigma_v$. 
Note that $K\cap \omega_y(G_y)$ must be finitely generated since $\omega_y(G_y)$ is Noetherian.
\end{enumerate}

It is immediate from Proposition~\ref{generators} that 1 and 4 from the definition of a benign graph of groups are jointy equivalent to the following statement.

\begin{enumerate}
\item [(5)] For each vertex $v\in V(Y)$ and each edge $y\in E(Y)$ with $\omega(y)=v$, there is an algorithm which given a finitely generated coset $A$ of $G_v$ (in terms of a finite generating set for the coset given by words from $\til \Sigma_v^*$) produces a finite generating set for the (possibly empty) coset $A\cap \omega_y(G_y)$ of $\omega_y(G_y)$ (represented by words in the alphabet $\til \Sigma_v$).  Again $A\cap \omega_y(G_y)$ is finitely generated since $G_y$ is Noetherian.
\end{enumerate}

We remark that in a benign graph of groups, given an element of $\omega_y(g)$ as a word $w$ in 
the generators $\til \Sigma_{\omega(y)}$ one can find effectively a word $v$ in the generators $\til \Sigma_{\alpha(y)}$ representing $\alpha_y(g)$.  
To see this assume that $G_y$ is finitely generated by $\Sigma_y$. Then 
the monomorphisms $\alpha_y$ and $\omega_y$ can be represented by 
mappings $\alpha'_y \colon \Sigma_y \to \til \Sigma_{\alpha(y)}^*$,
$\omega'_y \colon \Sigma_y \to \til\Sigma_{\omega(y)}^*$.
Then, given $w \in  \til \Sigma_{\omega(y)}^*$ one enumerates all words
$u \in \til\Sigma_y^*$ until a word $u$ with $\omega'_y(u) = w$ is found.
This word $u$ represents $g \in G_y$ and we can compute $v = \alpha'_y(u)$.
We shall use this fact below without comment.

\begin{theorem}[Kapovich, Weidmann, Myasnikov]\label{thm_gog}
Let $\mathbb G=(G,Y)$ be a finite, connected, non-empty benign graph of finitely
generated groups with underlying graph $Y$. Then the fundamental group
of $\mathbb G$ has decidable uniform generalized word problem if and only if
every vertex group does.
\end{theorem}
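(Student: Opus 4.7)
I would prove both directions separately. For the easy direction, assume $H = \pi_1(G, Y, v_0)$ has decidable uniform generalized word problem. Fix a spanning tree $T$ of $Y$ and let $p_v$ be the unique geodesic from $v_0$ to $v$ in $T$. The normal form theorem for fundamental groups of graphs of groups shows that $g \mapsto p_v g p_v\inv$ is an injective homomorphism $G_v \hookrightarrow H$ which is effective on words, and this reduces the uniform generalized word problem for $G_v$ to that for $H$.

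For the hard direction, by Proposition~\ref{generalizedwordproblem} it suffices to give an algorithm that, given a finite literal dual automaton $\mathscr A$ over $\Sigma$ with initial vertex $\iota$ and terminal vertex $\tau$, decides whether $1$ belongs to the subset of $H$ recognized by $\mathscr A$. The plan is to iteratively saturate $\mathscr A = \mathscr A_0, \mathscr A_1, \ldots$ by adding edges without introducing new vertices, preserving $\varphi(L(\mathscr A_i))$ at each step, so that upon termination $1$ lies in the recognized subset if and only if a $1$-labeled edge runs from $\iota$ to $\tau$. The moves are guided by the normal form theorem: a cycle-type word at $v_0$ represents $1$ in $H$ if and only if either it lies in $\til\Sigma_{v_0}^*$ and equals $1$ in $G_{v_0}$, or it contains a pinch $y\, w\, y\inv$ with $y \in E(Y)$ and $w \in \omega_y(G_y)$, which may be replaced by the strictly shorter word $\alpha_y(\omega_y\inv(w)) \in \til\Sigma_{\alpha(y)}^*$.

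I would introduce two kinds of saturation moves. The \emph{vertex-group move} generalizes the free-product proof above: for each $v \in V(Y)$ and each pair of automaton-vertices $p, q$, consider the sub-automaton of $\mathscr A_i$ using only edges labeled in $\til\Sigma_v$ or by $1$; it recognizes a finitely generated coset of $G_v$, and using decidability of the uniform generalized word problem in $G_v$ via Proposition~\ref{generalizedwordproblem}, one tests whether $1$ belongs and, if so, adds a $1$-edge from $p$ to $q$ together with its inverse edge. The \emph{edge-group move} simulates pinches: for each edge $y \in E(Y)$ with $\omega(y) = v$, each pair $p, q$ of automaton-vertices, and each pair of outer vertices $p', q'$ with a $y$-edge from $p'$ to $p$ and a $y\inv$-edge from $q$ to $q'$, use benignness condition~(5) to compute a finite generating set for the coset $C \cap \omega_y(G_y)$, where $C$ is the $G_v$-coset recognized by the $\til\Sigma_v$-and-$1$ sub-automaton from $p$ to $q$. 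For each generator $\omega_y(h)$ of this coset, add an edge from $p'$ to $q'$ labeled by a word in $\til\Sigma_{\alpha(y)}^*$ representing $\alpha_y(h)$, together with its inverse edge; the defining relation $y\, \omega_y(h)\, y\inv = \alpha_y(h)$ in $F(G,Y)$ guarantees that $\varphi(L(\mathscr A_i))$ is preserved.

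Termination rests on the Noetherian hypothesis on edge groups: for each triple $(p, q, y)$, the subgroup of $G_y$ corresponding via $\omega_y\inv$ to the coset $C \cap \omega_y(G_y)$ only grows during saturation, and the ascending chain condition on subgroups of $G_y$ forces stabilization after finitely many steps. Since no new automaton-vertices are introduced, there are finitely many such triples and hence finitely many nontrivial moves. Correctness follows by induction on the number of edge-letters in a minimum-length accepting path for $1$: the zero case produces a $1$-edge from $\iota$ to $\tau$ via the vertex-group move at $v_0$, while in any other minimum-length path the normal form theorem forces a pinch, which the edge-group move replaces by a strictly shorter accepting path for $1$. I expect the main obstacle to be making the edge-group move precise and effective enough to guarantee that each nontrivial application strictly enlarges one of the stabilizing edge-group subgroups, while simultaneously showing it preserves $\varphi(L(\mathscr A_i))$ and can be carried out purely from the benignness data.
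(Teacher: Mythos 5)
Your proposal is correct and follows essentially the same route as the paper: reduce via Proposition~\ref{generalizedwordproblem} to testing whether $1$ lies in the coset recognized by a finite dual automaton, saturate with a vertex-group move and an edge-group move driven by benignness condition (5), use the Noetherian hypothesis on edge groups (equivalently, the ascending chain condition on their cosets) for termination, and invoke Serre's normal form theorem to locate a pinch $y\,w\,y\inv$ in a minimal accepting word for $1$. The only ingredient you leave implicit is the paper's Lemma~\ref{crucial}: the edge-group move only adds edges for a finite generating set of $C\cap\omega_y(G_y)$, but since the target-side language $\varphi(\mathscr B(\Sigma_{\alpha(y)},p',q'))\cap\alpha_y(G_y)$ is itself a coset containing those generators' images, it contains all of $\alpha_y(\omega_y\inv(C\cap\omega_y(G_y)))$, which is exactly what lets you replace a pinch whose middle word does not map to one of the chosen generators.
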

\begin{proof}
Let $\varphi\colon F(\Sigma)\to F(G,Y)$ be the projection.  We retain the above notation.  In particular, we continue to use $H$ to denote the fundamental group of $\mathbb G$.
Since each vertex group embeds into the fundamental group~\cite{Serre03},
one implication is immediate.

Suppose that $K$ is a finitely generated subgroup of $H$.  We assume its generators are given as words over $\til \Sigma$ of cycle type at $v_0$.   Let $g\in H$ be given by a word of cycle type at $v_0$ and construct the literal dual automaton $\mathscr A$ over $\Sigma$ from the proof of Proposition~\ref{dualautomatalanguages} recognizing $Kg\inv$ (as a coset of $F(G,Y)$).  Then by construction $g\in K$ if and only if there is a word of cycle type at $v_0$ representing $1$ in $F(G,Y)$ and reading a path from $\iota$ to $\tau$.  We now perform a saturation procedure to $\mathscr A$ to obtain a new dual automaton $\mathscr B$ over $\Sigma$ containing $\mathscr A$  with the same vertex set and the same initial and terminal vertices.  Moreover, $\mathscr B$ will have the property that $\varphi(L(\mathscr A))=\varphi(L(\mathscr B))$ and that $g\in K$ if and only if there is an edge from $\iota$ to $\tau$ in $\mathscr B$ labeled by $1$.

The saturation procedure continues as long as one of the following steps can be performed.   We assume that at each stage of the construction, we have added no new vertices and that all edges are labeled by an element of $\til \Sigma_v^*$, for some $v$, or by an element of $E(Y)$.

 Suppose that the automaton at the current phase of the saturation procedure is $\mathscr A_i$.
If $\Lambda\subseteq \Sigma$ and $p,q$ are vertices of a dual automaton $\mathscr C$ over $\Sigma$, denote by $\mathscr C(\Lambda,p,q)$ the dual automaton consisting of all edges of $\mathscr C$ labeled by elements of $\til \Lambda^*$, taking as initial vertex $p$ and as terminal vertex $q$.

\begin{step}
If there are vertices $p\neq q$ so that $1\in \varphi(\mathscr A_i(\Sigma_v,p,q))\subseteq G_v$, then add an edge $p\xrightarrow{1}q$ and an inverse edge $q\xrightarrow{1}p$ if there are not already such edges.  This step can be done effectively because of our assumption that $G_v$ has a decidable uniform generalized word problem.
\end{step}

\begin{step}
Let $p\xrightarrow{y}q$ and $p'\xrightarrow{y}q'$ be edges in $\mathscr A_i$
with $y\in E(Y)$ (not necessarily distinct).  Let $L$ be the coset
$\varphi(\mathscr A_i(\Sigma_{\omega(y)},q,q'))$ and compute, using that the graph
of groups is benign, a finite  generating set $X$ for the (possibly empty)
coset $L\cap \omega_y(G_y)$ represented by words over $\til \Sigma_{\omega(y)}$.   For
each $x\in X$, find a word $w_x\in \til \Sigma_{\alpha(y)}^*$ representing
$\alpha_y\omega_y\inv(x)$ and add an edge $p\xrightarrow{w_x} p'$ and an inverse 
edge $p'\xrightarrow{w_x\inv} p$ if $\varphi(w)\notin \varphi(\mathscr A_i(\Sigma_{\alpha(y)},p,p'))$
(the latter can be checked effectively, 
since $G_{\alpha(y)}$ has a decidable uniform generalized word problem).
\end{step}

This procedure is continued until none of the steps can be performed further.  
Clearly this procedure does not change the accepted subset of $F(G,Y)$, which is the coset $Kg\inv \subseteq H$.

We must show that our procedure stops.  Step 1 can only be performed finitely many
times since we are adding no new vertices.  Since the edge groups are
Noetherian, they satisfy ascending chain condition on cosets.  
Step 2 can only be applied if the coset $\varphi(\mathscr A_i(\Sigma_{\alpha(y)},p,p'))\cap \alpha(G_y)$ is made into a bigger coset  
$\varphi(\mathscr A_{i+1}(\Sigma_{\alpha(y)},p,p'))\cap \alpha(G_y)$ by adding the elements of 
$X$ (following the notation of Step 2) written in the alphabet
$\Sigma_{\alpha(y)}$.   
Thus Step 2 can only be applied a finite number of times.

Hence the procedure eventually terminates with a `saturated' dual automaton $\mathscr B$. The following lemma is crucial.

\begin{lemma}\label{crucial}
Suppose that $p\xrightarrow{y}q$ and $p'\xrightarrow{y}q'$ are edges in $\mathscr B$ and that 
$w\in \til \Sigma_{\omega(y)}^*$ labels a path from $q$ to $q'$ in $\mathscr B$ and satisfies $\varphi(w)=\omega_y(g)$.  
Then there is a word $u\in \til \Sigma_{\alpha(y)}^*$ labeling a path from $p$ to $p'$ in $\mathscr B$ with $\varphi(u) = \alpha_y(g)$.
\end{lemma}

\begin{proof}
The element $\varphi(w)=\omega_y(g)$ belongs to the coset \[C=\varphi(\mathscr B(\Sigma_{\omega(y)},q,q'))\cap \omega_y(G_y).\]  
By saturation of $\mathscr B$ under Step 2, $\varphi(\mathscr B(\Sigma_{\alpha(y)},p,p'))\cap \alpha_y(G_y)$ 
contains a generating set of the coset $\alpha_y(\omega_y\inv(C))$ (represented by words over the alphabet $\til \Sigma_{\alpha(y)}$).  
Since in fact $\varphi(\mathscr B(\Sigma_{\alpha(y)},p,p'))\cap \alpha_y(G_y)$ is a coset,  
it thus contains $\alpha_y(\omega_y\inv(C))$ and so there is a word 
$u\in \til \Sigma_{\alpha(y)}^*$ such that $\varphi(u)=\alpha_y(g)$ and $u$ labels a path in $\mathscr B$ from $p$ to $p'$, as required.
\end{proof}

We now claim that $g\in K$ if and only if $1$ labels an edge from $\iota$ to
$\tau$ in $\mathscr B$.  If there is an edge from $\iota$ to $\tau$ labeled by
$1$, then trivially $g\in K$.  Conversely, if $g\in K$ then 
there is a word $w$ of cycle type at $v_0$ reading from $\iota$ 
to $\tau$ in $\mathscr A$ (and hence in $\mathscr B$) with $\varphi(w)=1$.  Choose $w = w_0 y_1 w_1
y_2 w_2 \dots y_n w_n$ satisfying (i)--(iv) with $n$ minimal so 
that $\varphi(w)=1$ and $w$ labels a path from $\iota$ to $\tau$ in $\mathscr B$.  
If $n=0$, then saturation under Step 1 shows that $1$ labels an 
edge from $\iota$ to $\tau$. Suppose $n\geq 1$.  We obtain a contradiction.  It follows by
\cite[Theorem~I.11]{Serre03} that there exists $i$ such that
$y_{i+1} =y_{i}\inv$ and $w_i$ represents an element $\omega_{y_i}(g_i)$ for
some $g_i \in G_{y_i}$. It now
follows from Lemma~\ref{crucial} that we can replace the factor $y_iw_iy_{i+1}$ by a word
$w_i'\in \til \Sigma_{\alpha(y_i)}^*=\til \Sigma_{\omega(y_{i-1})}^* = \til
\Sigma_{\alpha(y_{i+2})}^* $ to obtain a new word $w_0y_1\cdots
y_{i-1}w_{i-1}w_i'w_{i+1}y_{i+2}\cdots w_n$ of cycle type at $v_0$ labeling a
path in $\mathscr B$ from $\iota$ 
to $\tau$ and mapping to $1$ in $F(G,Y)$, again a contradiction to minimality of $n$.  This completes the proof.
\end{proof}

\def\cprime{$'$}

%\bibliographystyle{abbrv}
%\bibliography{bib}
%\bibliography{../../../BIBTEX/bib}

\begin{thebibliography}{10}

\bibitem{AvWi89}
J.~Avenhaus and D.~Wi{\ss}mann.
\newblock Using rewriting techniques to solve the generalized word problem in
  polycyclic groups.
\newblock In {\em Proceedings of the ACM-SIGSAM 1989 International Symposium on
  Symbolic and Algebraic Computation}, pages 322--337. ACM Press, 1989.

\bibitem{Benois69}
M.~Benois.
\newblock Parties rationnelles du groupe libre.
\newblock {\em C. R. Acad. Sci. Paris}, S{\'{e}}r. A, 269:1188--1190, 1969.

\bibitem{BenSak86}
M.~Benois and J.~Sakarovitch.
\newblock On the complexity of some extended word problems defined by
  cancellation rules.
\newblock {\em Information Processing Letters}, 23(6):281--287, 1986.

\bibitem{KaSiSt06}
M.~Kambites, P.~V. Silva, and B.~Steinberg.
\newblock On the rational subset problem for groups.
\newblock {\em Journal of Algebra}, 309(2):622--639, 2007.

\bibitem{KaWeMy05}
I.~Kapovich, R.~Weidmann, and A.~Myasnikov.
\newblock Foldings, graphs of groups and the membership problem.
\newblock {\em International Journal of Algebra and Computation},
  15(1):95--128, 2005.

\bibitem{LohSte08}
M.~Lohrey and B.~Steinberg.
\newblock The submonoid and rational subset membership problems for graph
  groups.
\newblock {\em Journal of Algebra}, 320(2):728--755, 2008.

\bibitem{Mal83}
A.~I. Malcev.
\newblock On homomorphisms onto finite groups.
\newblock {\em American Mathematical Society Translations, Series 2},
  119:67--79, 1983.
\newblock Translation from Ivanov. Gos. Ped. Inst. Ucen. Zap. 18 (1958) 49--60.

\bibitem{Mih59}
K.~A. Miha{\u\i}lova.
\newblock The occurrence problem for free products of groups.
\newblock {\em Doklady Akademii Nauk SSSR}, 127:746--748, 1959.

\bibitem{Mih66}
K.~A. Miha{\u\i}lova.
\newblock The occurrence problem for direct products of groups.
\newblock {\em Math. USSR Sbornik}, 70:241--251, 1966.
\newblock English translation.

\bibitem{Rip82b}
E.~Rips.
\newblock Subgroups of small cancellation groups.
\newblock {\em Bulletin of the London Mathematical Society}, 14:45--47, 1982.

\bibitem{Rom74}
N.~S. Romanovski{\u\i}.
\newblock Some algorithmic problems for solvable groups.
\newblock {\em Algebra i Logika}, 13(1):26--34, 1974.

\bibitem{Rom80}
N.~S. Romanovski{\u\i}.
\newblock The occurrence problem for extensions of abelian groups by nilpotent
  groups.
\newblock {\em Sibirsk. Mat. Zh.}, 21:170--174, 1980.

\bibitem{Schein66}
B.~Schein.
\newblock Semigroups of strong subsets.
\newblock {\em Volzhsky Matematichesky}, 4:180--186, 1966.

\bibitem{Serre03}
J.-P. Serre.
\newblock {\em Trees}.
\newblock Springer, 2003.

\bibitem{Stal83}
J.~R. Stallings.
\newblock Topology of finite graphs.
\newblock {\em Invent. Math.}, 71(3):551--565, 1983.

\bibitem{Umi95}
U.~U. Umirbaev.
\newblock The occurrence problem for free solvable groups.
\newblock {\em Sibirski\u\i\ Fond Algebry i Logiki. Algebra i Logika},
  34(2):211--232, 243, 1995.

\bibitem{Wise03}
D.~T. Wise.
\newblock A residually finite version of {R}ips's construction.
\newblock {\em Bulletin of the London Mathematical Society}, 35(1):23--29,
  2003.

\end{thebibliography}

\end{document}